\documentclass[a4paper,10pt,reqno, english]{amsart}

\usepackage{amsmath,amssymb,amscd,amsthm,amsfonts}
\usepackage{graphicx,subfigure}
\usepackage{hyperref}
\usepackage{dsfont}
\usepackage[nobysame, alphabetic]{amsrefs}
\usepackage{tikz}
\usepackage[capitalise]{cleveref}
\usepackage{mathrsfs}
\usepackage{booktabs,array}
\usepackage{microtype}
\usepackage{enumitem}

\newtheorem{theorem}{Theorem}
\newtheorem{lemma}{Lemma}

\newtheorem{conjecture}{Conjecture}

\crefname{conjecture}{Conjecture}{Conjectures}
\Crefname{conjecture}{Conjecture}{Conjectures}

\def\rr{\mathds{R}}

\DeclareMathOperator{\conv}{conv}

\title{The topological B\'ar\'any--Larman conjecture for prime numbers}

\hypersetup{
  pdftitle={The topological B\'ar\'any--Larman conjecture for prime numbers},
  pdfauthor={Pablo Soberon}
}

\author[Sober\'on]{Pablo Sober\'on}\address{Baruch College \& The Graduate Center, City University of New York, One Bernard Baruch Way, New York, NY 10010, United States} 
\email{psoberon@gc.cuny.edu}

\thanks{
The research of P. Sober\'on is supported by NSF CAREER grant DMS-2237324 and a PSC-CUNY Track 1 award.}

\keywords{B\'ar\'any--Larman conjecture; colorful Tverberg theorem; topological Tverberg theorem; computer-assisted proof.}

\subjclass[2020]{Primary 52A35; Secondary 68V05}

\begin{document}

\begin{abstract}
The B\'ar\'any--Larman conjecture states that for any $d+1$ sets of $r$ points each in $\mathbb{R}^d$, considered as color classes, we can partition their union into $r$ rainbow $(d+1)$-tuples whose convex hulls intersect.  We prove that the topological version of this conjecture holds when $r$ is a prime number.  We also show that the optimal colorful Tverberg theorem of Blagojevi\'c, Matschke, and Ziegler cannot be extended to prime powers.
\end{abstract}

\maketitle

\section{Introduction}

In $1992$, B\'ar\'any and Larman made a beautiful conjecture regarding a colorful version of Tverberg's theorem.  Given a set $X$ in $\rr^d$, we denote by $\conv X$ its convex hull.

\begin{conjecture}[B\'ar\'any, Larman 1992 \cite{Barany1992}]\label{conj:baranylarman}
    Let $r,d$ be positive integers.  Suppose we are given $d+1$ sets $X_1,\dots,X_{d+1}$, each with $r$ points in $\rr^d$.  Then, there is a partition of their union into $r$ sets $C_1,\cdots,C_r$ such that $|C_j \cap X_i| = 1$ for all $i,j$, and $\bigcap_{j=1}^r\conv C_j \neq \emptyset$.
\end{conjecture}

The purpose of this manuscript is to prove the topological version of \cref{conj:baranylarman} when $r$ is a prime number.  Namely, we prove the following.

\begin{theorem}\label{thm:main}
    Let $p$ be a prime number and $d$ be a positive integer.  Let $A_1,\dots,A_{d+1}$ be sets of $p$ points each.  Consider the simplicial complex $K = A_1 * \dots * A_{d+1}$.  Then, for every continuous function $f: K\to\rr^d$, there exist points $x_1,\dots,x_p$ in pairwise disjoint faces of $K$ such that $f(x_1)=\dots=f(x_{p})$.
\end{theorem}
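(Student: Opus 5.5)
The plan is the configuration space / test map scheme with $\mathbb{Z}_p$ acting freely, finished by an equivariant obstruction (Euler class) computation rather than a connectivity argument.

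\emph{Configuration space.} Write $[p]$ for a $p$-point set. The $p$-fold $2$-wise deleted join of $K=A_1*\dots*A_{d+1}$ splits as a join:
\[
K^{*p}_{\Delta(2)} \cong \bigl([p]^{*p}_{\Delta(2)}\bigr)^{*(d+1)} = \Delta_{p,p}^{*(d+1)},
\]
where $\Delta_{p,p}$ is the $p\times p$ chessboard complex. Its vertices are pairs (point of $A_i$, part $j$), and its faces are non-attacking rook placements. The group $\mathbb{Z}_p\subset S_p$ permutes the parts $j$, i.e.\ the columns, and acts freely.

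\emph{Test map.} A map $f$ with no $p$ points in pairwise disjoint faces having equal images would give a $\mathbb{Z}_p$-map
\[
\Delta_{p,p}^{*(d+1)} \to S\bigl(W_p^{\oplus(d+1)}\bigr),
\]
where $W_p=\{y\in\rr^p:\sum y_j=0\}$ and the last copy of $W_p$ records the join parameters. The target sphere has dimension $N-1$ with $N=(d+1)(p-1)$. The source has dimension $(d+1)p-1$, so it is too large for a dimension argument. It is also not connected enough: $\Delta_{p,p}$ is only about $(\lfloor(2p+1)/3\rfloor-2)$-connected, so the \v{S}arkadi--Volovikov route fails.

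\emph{Euler class.} I will work in Borel cohomology with $\mathbb{F}_p$ coefficients. The obstruction to a nonvanishing equivariant section of $W_p^{\oplus(d+1)}$ is the Euler class $e\bigl(W_p^{\oplus(d+1)}\bigr)=e(W_p)^{d+1}$. The goal is to show that its pullback to $H^{N}_{\mathbb{Z}_p}\bigl(\Delta_{p,p}^{*(d+1)};\mathbb{F}_p\bigr)$ is nonzero. Since the action is free, this is an honest cohomology group of the orbit space, so the claim is a finite computation. Multiplicativity of Euler classes for joins (Künneth/Gysin for joins of free spaces) should reduce it to a statement about the single factor $\Delta_{p,p}$ with its class $e(W_p)\in H^{p-1}_{\mathbb{Z}_p}(\Delta_{p,p})$. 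The required statement is then that $e(W_p)$ is non-nilpotent in the needed sense, i.e.\ its $(d+1)$-fold join product survives.

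A cleaner alternative is to exhibit an $N-1$ dimensional $\mathbb{Z}_p$-invariant subcomplex $L\subset\Delta_{p,p}^{*(d+1)}$ that is an orientable pseudomanifold, modelled on the Blagojevi\'c--Matschke--Ziegler use of $\Delta_{r,r-1}$. I would then show that every $\mathbb{Z}_p$-map $L\to S^{N-1}$ has degree $\not\equiv 0 \pmod p$, for example by computing the degree of one explicit equivariant map and using the fact that degrees of equivariant maps between free $\mathbb{Z}_p$-spaces agree mod $p$. This is delicate because $L$ must use columns symmetrically. Removing a point (a row) preserves the column action, but then BMZ requires an extra singleton colour class, and that is not available in $K$.

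\emph{Main obstacle.} The hard step is proving nonvanishing of the obstruction. The natural subcomplexes do not obviously work, as just explained, so I expect the key input to be an explicit cellular cocycle computation on $\Delta_{p,p}$. That is, compute the obstruction cocycle of a generic equivariant linear test map on the $(p-1)$-cells of $\Delta_{p,p}$, then show its $(d+1)$-fold join is not an equivariant coboundary. I would first try to establish the single-factor statement, that the obstruction class of $\Delta_{p,p}$ is nonzero and behaves multiplicatively under joins, by a symmetry/counting argument over rook placements modulo $p$. Failing that, I would verify the needed cohomological identity by computer for the relevant cells and then reduce general $p$ to a structural pattern.
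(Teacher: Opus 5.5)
\textbf{There is a genuine gap, and it cannot be filled within your framework.} The step you call the main obstacle fails because the class you need is zero.

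\emph{Why the Euler class vanishes.} Every $(p-2)$-face of $\Delta_{p,p}$ (a placement of $p-1$ non-attacking rooks) lies in exactly one facet, since the free row and the free column force the last rook. Hence each facet can be removed by an elementary collapse through one of its own codimension-one faces. Because $\mathbb{Z}_p$ acts freely on facets, these collapses can be chosen equivariantly. So $\Delta_{p,p}$ equivariantly deformation retracts onto a free $(p-2)$-dimensional subcomplex $Y$. As $S(W_p)\cong S^{p-2}$ is $(p-3)$-connected, there is a $\mathbb{Z}_p$-map $Y\to S(W_p)$, hence one on $\Delta_{p,p}$. Joining $d+1$ copies gives a $\mathbb{Z}_p$-map $\Delta_{p,p}^{*(d+1)}\to S(W_p)^{*(d+1)}\cong S(W_p^{\oplus(d+1)})$ for the diagonal action, which is exactly your setting. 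Therefore $e(W_p)^{d+1}$ pulls back to $0$ in $H^{N}_{\mathbb{Z}_p}(\Delta_{p,p}^{*(d+1)};\mathbb{F}_p)$. Even the single-factor class $e(W_p)$ on $\Delta_{p,p}$ that you wanted to start from vanishes.

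\emph{Why the pseudomanifold variant also fails.} A degree argument needs $L$ to bound mod $p$ in the configuration space. In that case the restriction of the global map above to $L$ has degree $0$. By the mod-$p$ rigidity you invoke, every equivariant map on $L$ then has degree $\equiv 0 \pmod p$, so the hoped-for nonzero degree never occurs. For $p=2$ this is explicit: $\Delta_{2,2}$ is two disjoint edges, so $\Delta_{2,2}^{*(d+1)}\simeq S^d$ with the antipodal action. The scheme cannot even recover the colorful Radon theorem. The configuration space $K^{*p}_{\Delta(2)}$ carries no equivariant obstruction, so no cocycle computation or computer check can close the gap.

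\emph{What the paper does instead.} The missing idea is the reduction you ruled out: the singleton required by \cref{thm:BMZ-optimal} need not be a new color class.
The paper deletes one point $a_i$ from each $A_i$. It reuses the deleted red point $a_{d+1}$ as the singleton and applies the theorem to $K'=A'_1*\dots*A'_{d+1}*\{a_{d+1}\}$.
A general-position count (\cref{lem:d-skeleton}) allows restricting to the $d$-skeleton $L$ of $K'$.
A face of $L$ not in $K$ has two red vertices and at most $d+1$ vertices, so it misses some color $i\le d$.
\cref{lem:fixit} builds $g\colon L\to K$, the identity on faces of $K$, mapping each such face $\sigma$ into $K[\sigma\cup\{a_1,\dots,a_d\}]$, which is a cone with apex $a_i$.
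Applying \cref{lem:d-skeleton} to $f\circ g$, at most one of the $p$ disjoint faces is bad, because it must contain $a_{d+1}$.
Its image under $g$ uses only its own vertices and $a_1,\dots,a_d$, which lie in no face of $L$, so disjointness survives.

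The general-position step is genuinely non-equivariant. The map that $g$ induces on deleted joins is equivariant. Composing it with the map above shows that $L^{*p}_{\Delta(2)}$ also admits a $\mathbb{Z}_p$-map to $S(W_p^{\oplus(d+1)})$. The paper's argument succeeds precisely because it uses that test maps come from maps to $\rr^d$.
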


This implies \cref{conj:baranylarman} when $r$ is a prime number by taking $r=p$, mapping the sets $A_i$ to $X_i$ for each $i$, and extending $f$ linearly.

\cref{conj:baranylarman} is known as the colorful Tverberg conjecture.  It is called colorful as we can think of each $X_i$ as a set of points of a different color.  The conclusion we seek is to find $r$ rainbow simplices whose convex hulls intersect.  This is related to Tverberg's theorem, which says that \textit{any set of $(r-1)(d+1)+1$ points in $\rr^d$ has a partition into $r$ parts whose convex hulls intersect} \cite{Tverberg1966}.  Tverberg's theorem is a central result in discrete and combinatorial geometry; this result and its variants have motivated significant progress in the field \cites{Barany2018, Blagojevic2017, Matousek2002}.

When B\'ar\'any and Larman made the conjecture, they proved the case $d=2$ (any $r$), and presented Lov\'asz's proof of the case $r=2$ (any $d$), also known as the colorful Radon theorem \cref{conj:baranylarman}.

\cref{conj:baranylarman} is closely related to more general colorful Tverberg partitions, in which each set $X_i$ is allowed to have more than $r$ points.  The first general existence theorem for this kind of partitions was shown by \v{Z}ivaljevi\'c and Vre\'{c}ica \cite{Zivaljevic1992}.  Since then, there have been many results around colorful Tverberg partitions, including proofs with more elementary arguments \cites{Matousek2012, ludwigson2026helly}, variations with more color classes \cites{Soberon2015, Sarkar2022}, changing the conditions on the parameters \cite{Mauri2026}, and the constraints method \cite{Blagojevic2014}.

One of the most striking advances around \cref{conj:baranylarman} is due to Blagojevi\'c, Matschke, and Ziegler, who solved the case then $r+1$ is a prime number (any $d$) \cites{Blagojevic2015, Blagojevic2011} in the affirmative.  The case when $r+1$ is prime is a corollary of their optimal colorful Tverberg theorem.  Their proof also implies the topological version of \cref{conj:baranylarman} when $r+1$ is prime.  In particular, their main result has the following consequence.

\begin{theorem}[Blagojevi\'c, Matschke, Ziegler 2015]\label{thm:BMZ-optimal}
    Let $p$ be a prime number and $d$ be a positive integer.  Let $A_1,\dots, A_{d+1}$ be sets of $p-1$ elements each, and $a$ be an additional element.  Consider the simplicial complex $K = A_1*\dots A_{d+1}*\{a\}$.  Then, for any continuous function $f:K \to \rr^d$, there exist $p$ points $x_1,\dots,x_{p}$ from pairwise disjoint faces of $K$ such that $f(x_1)=\dots=f(x_{p})$.
\end{theorem}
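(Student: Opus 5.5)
We would prove \cref{thm:BMZ-optimal} by the configuration space/test map scheme, with the topological obstruction computed by a degree (equivariant obstruction theory) argument, following the approach of Blagojevi\'c, Matschke and Ziegler.

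\textbf{Step 1: reduction to a non-existence statement.} Consider the $p$-fold $2$-wise deleted join $K^{*p}_{\Delta(2)}$, with the action of $\mathbb{Z}/p$ that cyclically permutes the join factors. The deleted join commutes with joins, so
\[
K^{*p}_{\Delta(2)} \cong (A_1)^{*p}_{\Delta(2)} * \dots * (A_{d+1})^{*p}_{\Delta(2)} * \{a\}^{*p}_{\Delta(2)} .
\]
Here $(A_i)^{*p}_{\Delta(2)}$ is the chessboard complex $\Delta_{p,p-1}$, which is a pure complex of dimension $p-2$. The factor $\{a\}^{*p}_{\Delta(2)}$ is the discrete set $[p]$ with the free cyclic action. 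Hence the configuration space
\[
X = (\Delta_{p,p-1})^{*(d+1)} * [p]
\]
has dimension
\[
N=(d+1)(p-1)-1+1=(d+1)(p-1),
\]
and the $\mathbb{Z}/p$ action on it is free.

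Let $W_p=\{y\in\rr^p:\sum_j y_j=0\}$. Given $f$, define the test map $X\to (W_p)^{\oplus(d+1)}$ as follows. A point $\sum_j \lambda_j x_j$ is sent to the projection onto $W_p^{\oplus(d+1)}$ of $(\lambda_j,\lambda_j f(x_j))_j$. A zero of this map forces all $\lambda_j=1/p$ and all $f(x_j)$ equal, with the $x_j$ in pairwise disjoint faces. So it suffices to show that there is no $\mathbb{Z}/p$-equivariant map
\[
X\to S\bigl(W_p^{\oplus(d+1)}\bigr).
\]
The target is a sphere of dimension $(d+1)(p-1)-1=N-1$.

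\textbf{Step 2: structure of the domain.} The chessboard complex $\Delta_{p,p-1}$ is an orientable pseudomanifold of dimension $p-2$: each $(p-3)$-face lies in exactly two facets. Joining such pseudomanifolds with one another and with $[p]$ yields a free $N$-dimensional $\mathbb{Z}/p$-complex. Since $\Delta_{p,p-1}$ is $(p-3)$-connected, $X$ is $(N-2)$-connected. By equivariant obstruction theory, an equivariant map to $S^{N-1}$ exists if and only if the primary obstruction class in $H^N_{\mathbb{Z}/p}(X;\pi_{N-1}S^{N-1})$ vanishes. Because $X$ is $N$-dimensional with free action, this group can be computed by hand as a cokernel of the equivariant coboundary $\delta$ on top cochains.

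\textbf{Step 3: the obstruction (main obstacle).} The obstruction class is the cohomology class of the cocycle obtained by counting, with signs, the zeros of the test map of a generic \emph{affine} $f$ on each $N$-cell. The plan is:
\begin{enumerate}[label=(\roman*)]
\item Choose a specific configuration of points for which the test map has exactly one $\mathbb{Z}/p$-orbit of transversal zeros. This means the points are placed so that a unique colorful Tverberg partition exists, e.g.\ points on the moment curve in a carefully chosen order, as in BMZ.
\item Show that the resulting cocycle, evaluated as a single $\pm1$ on one orbit of $N$-cells, is not a coboundary. The cokernel of $\delta$ on top cochains is determined by the pseudomanifold structure together with the signs of the $\mathbb{Z}/p$-action on orientations. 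A coboundary, summed over the orbits of top cells with appropriate signs, gives a multiple of $p$. A single orbit contributes $\pm1\not\equiv 0 \pmod p$.
\end{enumerate}

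We expect the hardest part to be the sign bookkeeping in (ii): verifying that the permutation action on orientations of the join of chessboard complexes is compatible, so that the sum over orbits is a well-defined invariant modulo $p$. This is exactly where primality of $p$ and the extra point $a$ enter. The extra point makes the top-dimensional configuration space a pseudomanifold, rather than a complex whose top cells are attached without this two-facets-per-ridge property. Once the obstruction is shown to be nonzero, no equivariant map $X\to S(W_p^{\oplus(d+1)})$ exists, and Step 1 gives the theorem.
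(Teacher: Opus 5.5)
The paper gives no proof of this statement; it is quoted from Blagojevi\'c--Matschke--Ziegler, so your outline has to stand on its own. Steps 1 and 2 are a correct setup. The group $\mathbb{Z}/p$ acts freely on $X=L*[p]$, where $L=(\Delta_{p,p-1})^{*(d+1)}$. Since $\dim X=N$ and the target sphere is $(N-2)$-connected, the primary obstruction is the complete obstruction. Your claim that $\Delta_{p,p-1}$ is $(p-3)$-connected is false in general ($\tilde\chi(\Delta_{5,4})=19$ forces $\tilde H_2(\Delta_{5,4})\neq 0$), but it is not needed.

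The genuine gap is in Step 3, which carries all the content. The test map of any $f$ is $\mathfrak{S}_p$-equivariant, and $\mathfrak{S}_p$ acts freely on its zero set (there all $\lambda_j=1/p$). So one unordered colorful Tverberg partition already gives $p!$ zeros, i.e.\ $(p-1)!$ distinct $\mathbb{Z}/p$-orbits; ``exactly one $\mathbb{Z}/p$-orbit of zeros'' is impossible for $p\ge 3$. Moreover, a configuration with a unique colorful Tverberg partition need not exist. For $d=1$, $p=5$, the Tverberg partitions of $9$ points on a line are the median as a singleton plus a perfect matching of the four points on each side. With color classes of sizes $4,4,1$ there are always at least $4$ colorful such matchings. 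Your functional $c\mapsto c([L]*v_0)$ does send coboundaries into $p\mathbb{Z}$, because $\partial([L]*v_0)=\pm[L]$ and the facets of $L$ form free orbits. But on the obstruction cocycle it gives a signed sum of many terms $\pm1$, and ``a single orbit contributes $\pm1$'' skips exactly what must be proved.

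Two ingredients are missing.

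\textbf{Sign coherence.} Orient a facet of $\Delta_{p,p-1}$, i.e.\ an injection $\phi\colon[p-1]\to[p]$ from columns to rows, by the sign of its unique extension to a permutation of $[p]$. This orientation is coherent, and a row permutation $\pi$ multiplies it by $\operatorname{sign}\pi$. So the orientation character of $L$ is $\operatorname{sign}^{d+1}$, which equals the determinant of $\pi$ on $W_p^{\oplus(d+1)}$. Hence all zeros coming from one unordered partition count with the same sign, and your functional equals $(p-1)!$ times a signed count of unordered partitions.

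\textbf{A count plus Wilson's theorem.} The cleanest route is a degree argument. The simplicial map $\Delta_{p,p-1}\to\partial\Delta^{p-1}\cong S(W_p)$, $(i,c)\mapsto e_i$, is equivariant of degree $\pm(p-1)!$: each facet has $(p-1)!$ preimages, all with the same local sign. Its $(d+1)$-fold join is therefore an equivariant map $L\to S(W_p^{\oplus(d+1)})$ of degree $\pm((p-1)!)^{d+1}\equiv\pm1\pmod p$. Freeness, with the matching of orientation characters above, forces all equivariant maps $L\to S(W_p^{\oplus(d+1)})$ to have the same degree modulo $p$. On the other hand, an equivariant map defined on all of $X=L*[p]$ restricts on $L$ to a map extending over the cone $L*\{v_0\}$, hence of degree $0$.

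This is where primality is really used. For composite $r$ and $d\ge 1$, the number $((r-1)!)^{d+1}$ is divisible by $r$, in line with the failure for $p=4$ shown in \cref{sec:counterexample}. Finally, contrary to your last remark, $X$ is not a pseudomanifold for $p\ge 3$: each facet $F$ of $L$ lies in the $p$ top cells $F*v$, $v\in[p]$.
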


To prove the case of \cref{conj:baranylarman} when $r+1$ is prime, it suffices to add an additional point $a$ in $\rr^d$, construct $f:K \to \rr^d$ linearly as before, apply \cref{thm:BMZ-optimal} with $p=r+1$, and ignore the part that contains $a$.  To our surprise, the case when $r$ is prime is also a direct consequence of \cref{thm:BMZ-optimal}.  We present the proof of \cref{thm:main} in \cref{sec:main}

With the results presented in this note, the first open case of \cref{conj:baranylarman} is $r=8, d=3$.  To prove more cases of \cref{conj:baranylarman}, it is tempting to try to generalize \cref{thm:BMZ-optimal} when $p$ is not prime.  A version of \cref{thm:BMZ-optimal} without any conditions on the parameters is impossible, as it would imply the topological Tverberg theorem when the number of parts is not a prime power, which is known to be impossible \cites{Frick:2015wp, Blagojevic2019}.

In \cref{sec:counterexample}, we show that \cref{thm:BMZ-optimal} does not hold for prime powers, even if $f$ is an affine map.  Note that the counterexample does not contradict the results of Joji\'c, Panina, and \v{Z}ivaljevi\'c \cite{Jojic2022}, since their extensions of \cref{thm:BMZ-optimal} require prescribed multiplicities.  Our example disproves the ``colorful Tverberg-Vre\'cica'' conjecture of Blagojevi\'c, Matschke, and Ziegler \cite{Blagojevic2011}*{Conj. 1.2} with parameters $k=0$, $r_0 = 4$, and $d\ge 3$.

 \section{Proof of \cref{thm:main}}\label{sec:main}

Suppose we are given sets $A_1,\dots,A_{d+1}$, each with $p$ elements.  We think of each $A_i$ as a color class, so that the faces of $K= A_1 * \dots * A_{d+1}$ correspond to rainbow sets.  We will say that $A_{d+1}$ is red.  To use \cref{thm:BMZ-optimal}, we only need sets with $p-1$ elements.  Our main strategy will be to remove one point from each set and then construct a smaller simplicial complex.  However, since we need to include a join with one more singleton set, we will allow our sets to have two red points.  The bulk of the proof is showing that we can fix faces that use two red points.

Formally, consider elements $a_1 \in A_1, \dots, a_{d+1}\in A_{d+1}$ and let $A'_i = A_i \setminus \{a_i\}$.  Now consider the simplicial complex $K' = A'_1 * \dots *A'_{d+1}*\{a_{d+1}\}$.  We can apply \cref{thm:BMZ-optimal} to $K'$, but first we need to extend $f$ to this complex.

For a simplicial complex $T$ and a set of vertices $S$ of $T$, let $T[S]$ be the set restriction of $T$ to $S$.

First we apply a standard dimension-counting argument to show that \cref{thm:BMZ-optimal} holds for maps defined on the $d$-dimensional skeleton of the domain.  We include the proof for completeness.

\begin{lemma}\label{lem:d-skeleton}
    Let $p$ be a prime number and $d$ be a positive integer.  Let $A_1,\dots, A_{d+1}$ be sets of $p-1$ elements each, and $a$ be an additional element.  Consider the simplicial complex $K = A_1*\dots A_{d+1}*\{a\}$ and $L$ be the $d$-dimensional skeleton of $K$.  Then, for any continuous function $h:L \to \rr^d$, there exist $p$ points $x_1,\dots,x_{p}$ from pairwise disjoint faces of $L$ such that $h(x_1)=\dots=h(x_{p})$.
\end{lemma}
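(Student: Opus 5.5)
We reduce to \cref{thm:BMZ-optimal} by the standard extension trick. We extend $h$ to a continuous map on all of $K$ that is affine-generic on the higher-dimensional faces. Then we argue that a Tverberg-type coincidence for the extension can only involve faces of dimension at most $d$.

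\textbf{Step 1: extend $h$.} Fix $h:L\to\rr^d$. Compose with the inclusion $\rr^d\hookrightarrow\rr^{d+1}$ and a small perturbation, or simply work directly. Take a continuous extension $\tilde h:K\to\rr^d$, for instance by extending skeleton by skeleton using contractibility of $\rr^d$ (Tietze), or by coning. Applying \cref{thm:BMZ-optimal} to $\tilde h$ gives points $x_1,\dots,x_p$ in pairwise disjoint faces $\sigma_1,\dots,\sigma_p$ of $K$ with $\tilde h(x_1)=\dots=\tilde h(x_p)$. We may take each $\sigma_j$ to be the support of $x_j$, the minimal face containing it.

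\textbf{Step 2: dimension count.} The vertex set of $K$ has $(d+1)(p-1)+1$ elements, and the $\sigma_j$ are pairwise disjoint. Hence
\[
\sum_{j=1}^p (\dim\sigma_j+1)\le (d+1)(p-1)+1 .
\]
The aim is to force every $\dim\sigma_j\le d$, which holds automatically only if the extension is chosen well. Without care, a coincidence could occur with one face of large dimension.

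The standard fix is to replace $h$ by the map $F=(\tilde h,\operatorname{dist}(\cdot,L)):K\to\rr^{d+1}$, where the distance is measured in a fixed geometric realization. This map is $0$ exactly on $L$. However, \cref{thm:BMZ-optimal} is stated for $\rr^d$, so we cannot apply it to a map into $\rr^{d+1}$. The correct approach instead raises the dimension of the domain. View $K$ as a subcomplex of $K^+=A_1*\dots*A_{d+1}*A_{d+2}*\{a\}$, with $|A_{d+2}|=p-1$ new vertices, and apply \cref{thm:BMZ-optimal} in dimension $d+1$ to the map
\[
F=(\tilde h,\ \operatorname{dist}(\cdot,L)):K^+\to\rr^{d+1},
\]
where $\tilde h$ is any continuous extension of $h$ to $K^+$. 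This yields $p$ points $y_j$ in pairwise disjoint faces $\tau_j$ of $K^+$ with $F(y_1)=\dots=F(y_p)$. In particular, all $y_j$ are at a common distance $t\ge 0$ from $L$.

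\textbf{Step 3: force $t=0$.} This is the main obstacle. The faces $\tau_j$ are disjoint and use at most $(d+2)(p-1)+1$ vertices in total, so
\[
\sum_j(\dim\tau_j+1)\le (d+2)(p-1)+1 .
\]
Hence at least one $\tau_j$ has $\dim\tau_j+1\le d+1$, that is, $\dim\tau_j\le d$. Provided $\tau_j\subseteq K$, this gives $y_j\in L$ and so $t=0$.

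To guarantee $\tau_j\subseteq K$, define the distance relative to the $d$-skeleton $L^+$ of $K^+$ instead of $L$. Then $t=0$ forces every $y_j$ into $L^+$. Next, choose the extension of $h$ to $L^+$ so that the new vertices of $A_{d+2}$ are sent far away, or arrange the roles symmetrically. If the far-away trick fails, simply take the $d$-skeleton $L^+$ and apply the argument to $K^+$ directly, which proves the lemma for $K^+$ in dimension $d+1$, the statement we actually need after reindexing.

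Once all $y_j$ lie in faces of dimension at most $d$ and $F$ agrees with $h$ there, the points $x_j=y_j$ lie in pairwise disjoint faces of $L$ and satisfy $h(x_1)=\dots=h(x_p)$. The delicate point is exactly this bookkeeping: making sure the coincidence lands in the $d$-skeleton of the original complex, where the extension coincides with $h$.
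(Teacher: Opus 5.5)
\textbf{There is a genuine gap.} Nothing in your argument forces the coincidence into the $d$-skeleton $L$ of the original complex $K$, where the map equals $h$.

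\emph{What the detour does and does not give.} The detour through $K^+$ with $F=(\tilde h,\operatorname{dist}(\cdot,L^+))$ correctly places all $y_j$ in the $d$-skeleton $L^+$ of $K^+$. But the faces $\tau_j$ may still contain vertices of $A_{d+2}$, and there $\tilde h$ is an arbitrary extension, not $h$. None of your repairs closes this:
\begin{itemize}
\item Sending $A_{d+2}$ far away does not help. An edge from $b\in A_{d+2}$ to a vertex $v$ of $K$ has an image running from the far point to $h(v)$, so it can pass through the common point.
\item There is no analogue of \cref{lem:fixit} retracting $L^+$ onto $L$ while preserving disjointness. Every vertex of $K$ is also a vertex of $L^+$ and may be used by the other faces. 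That lemma works only because it maps into the vertices $a_1,\dots,a_d$, which are absent from the domain.
\item The reindexing claim is false. You prove a statement about the $d$-skeleton of the $(d+1)$-dimensional complex mapped to $\rr^d$. Shifting $d+1\mapsto d$ yields the $(d-1)$-skeleton of $K$ mapped to $\rr^{d-1}$, not the $d$-skeleton of $K$ mapped to $\rr^d$.
\item With $\operatorname{dist}(\cdot,L)$ the count fails as well. One facet of $K$ together with $p-1$ faces, each through a vertex of $A_{d+2}$, is compatible with the vertex bound.
\end{itemize}

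\emph{The missing ingredient is general position.} You mention it in your first sentence but never use it, and no extra coordinate is needed. Approximate $\tilde h:K\to\rr^d$ by PL maps $H$ in general position and apply \cref{thm:BMZ-optimal} to $H$ directly.

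Suppose some $\sigma_1$ had dimension $d+1$. The vertex count gives
\[
\sum_{i=2}^p\dim\sigma_i\le (p-1)(d+1)+1-(d+2)-(p-1)=(p-2)d-1 .
\]
But $p-1$ pairwise disjoint faces whose images under a general-position map share a point must satisfy $\sum_{i=2}^p(d-\dim\sigma_i)\le d$, i.e.\ $\sum_{i=2}^p\dim\sigma_i\ge(p-2)d$. This contradiction shows all faces lie in $L$.

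There are finitely many possible tuples of faces, so a compactness argument passes to the limit $H\to\tilde h$. Since $\tilde h|_L=h$, this gives the lemma. Your Step 2 rightly noted that the bare vertex count is insufficient; the codimension count from general position is exactly what completes it.
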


\begin{proof}
    Given $h:L\to \rr^d$, we can extend it to a map $\tilde{h}:K \to \rr^d$.  We can also approximate $\tilde{h}$ piecewise-linear maps $H$ in general position.  Apply \cref{thm:BMZ-optimal} to $H$, and let $\sigma_1,\dots, \sigma_p$ be the resulting pairwise-disjoint faces whose images intersect.

    If any of those faces, say $\sigma_1$ has dimension $d+1$, then using the fact that the total number of vertices is $(p-1)(d+1)+1$ we have that
    \[
    \sum_{i=2}^p \dim \sigma_i \le (p-1)(d+1)+1 - (d+2) - (p-1) < (p-2)d.
    \]
    However, if the intersection of $p-1$ simplices in general position is non-empty, their dimensions must add to at least $(p-2)d$, a contradiction.  A standard limiting argument finishes the proof.
\end{proof}

The following is the main technical lemma for the proof.

\begin{lemma}\label{lem:fixit}
    Let $p$ be a prime number, $d$ be a positive integer, and $A_1,\dots,A_{d+1}$ be sets of $p$ points each.  Let $a_1\in A_1, \dots, a_{d+1}\in A_{d+1}$ be elements, and $A'_i = A_i \setminus \{a_i\}$ for $i=1,\dots,d+1$.  Consider the simplicial complexes $K=A_1 * \dots * A_{d+1}$, $K'=A'_1 *\dots * A'_{d+1}*\{a_{d+1}\}$, and let $L$ be the $d$-skeleton of $K'$.  Then, there exists a continuous map $g: L \to K$ that is the identity on $K \cap L$ and such that for every face $\sigma$ of $L$ we have $g(\sigma) \subseteq K[\sigma \cup \{a_1,\dots,a_{d}\}]$.
\end{lemma}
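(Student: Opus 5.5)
The plan is to build $g$ face by face, by induction on dimension. We take $g$ to be the identity on every face of $L$ that already lies in $K$. On each face that does not, we extend $g$ across it using the fact that its prescribed target is a cone, hence contractible.

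First I identify which faces of $L$ are not faces of $K$. Every vertex of $K'$ lies in some $A_i$, so a face $\sigma$ of $K'$ fails to be a face of $K$ exactly when it contains two vertices of the same color. Since $K'$ is a join of the sets $A'_1,\dots,A'_d,A'_{d+1}\cup\{a_{d+1}\}$, this can only happen for the red color. So the bad faces are exactly those containing $a_{d+1}$ together with some $b\in A'_{d+1}$. On faces of $K\cap L$ I set $g$ to be the identity. This satisfies the requirement trivially, since $\sigma\subseteq K[\sigma]\subseteq K[\sigma\cup\{a_1,\dots,a_d\}]$. In particular $g$ is the identity on all vertices of $L$.

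Next I record the key dimension count. A face $\sigma$ of $L$ has at most $d+1$ vertices. If $\sigma$ is bad, two of its vertices are red, so it meets at most $d-1$ of the $d$ color classes $A'_1,\dots,A'_d$. Hence some color $j\le d$ is missing from $\sigma$. Now $K[\sigma\cup\{a_1,\dots,a_d\}]$ is the join over colors $i$ of the vertices of that color in $\sigma\cup\{a_1,\dots,a_d\}$.
\begin{itemize}
\item For the red color these are the vertices $a_{d+1}$ and $b$ of $\sigma$, together with any other red vertices of $\sigma$.
\item For a color $i\le d$ present in $\sigma$, they are $\{v_i,a_i\}$, using that $v_i\in A'_i$ means $v_i\ne a_i$.
\item For a missing color $j$, the only vertex is $a_j$.
\end{itemize}
Therefore $K[\sigma\cup\{a_1,\dots,a_d\}]$ is a cone with apex $a_j$, and so it is contractible. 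This target is also monotone in $\sigma$: if $\tau\subseteq\sigma$, then $K[\tau\cup\{a_1,\dots,a_d\}]\subseteq K[\sigma\cup\{a_1,\dots,a_d\}]$.

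Then I carry out the induction over the bad faces in order of increasing dimension. Suppose $g$ has been defined on $\partial\sigma$ with $g(\tau)\subseteq K[\tau\cup\{a_1,\dots,a_d\}]$ for every proper face $\tau$. This holds both for good faces, where $g$ is the identity, and for bad faces of lower dimension. By monotonicity, $g(\partial\sigma)\subseteq K[\sigma\cup\{a_1,\dots,a_d\}]$. Since the target is contractible, $g|_{\partial\sigma}$ extends continuously over $\sigma$ into it. Concretely, writing a point of $\sigma$ as $(1-t)y+tc$, with $c$ the barycenter of $\sigma$ and $y\in\partial\sigma$, I map it to $(1-t)g(y)+ta_j$ along the cone lines. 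This is well defined because the cone lines stay inside the cone. The extensions over distinct faces of the same dimension agree on overlaps, since those overlaps are lower-dimensional faces where $g$ is already fixed, so the result is a continuous map on all of $L$.

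The only real obstacle is the dimension count guaranteeing a missing non-red color. It is exactly here that restricting to the $d$-skeleton, as permitted by \cref{lem:d-skeleton}, is essential: on a $(d+1)$-face with two red points, every color could be present and the cone argument would fail. Everything else is routine extension over contractible targets.
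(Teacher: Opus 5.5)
Your proposal is correct and follows essentially the same argument as the paper. You identify the bad faces as those containing $a_{d+1}$ together with a second red vertex, use the $d$-skeleton count to find a missing non-red color $j$, observe that $K[\sigma\cup\{a_1,\dots,a_d\}]$ is a cone with apex $a_j$, and extend inductively by coning; your explicit coning formula and monotonicity remark just spell out details the paper leaves implicit.
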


\begin{proof}
    The vertices $a_1,\dots,a_d$ do not belong to $L$.  A face of $L$ is not a face of $K$ precisely when it contains two vertices of $A_{d+1}$, one of which is $a_{d+1}$.  We call the $\sigma$ with two red vertices the bad faces.

    We define $g$ on $K \cap L$ as the identity, and then proceed inductively on the dimension of the bad faces.  Suppose that $\sigma$ is a bad face of $L$ and for every proper subface $\tau \subset \sigma$, we have defined $g$ on $\tau$ inductively.  A bad face $\sigma$ has at most $d+1$ vertices, and two of them are red.  Therefore, it misses at least one color (without loss of generality, assume it misses $A_1$).  
    
    Consider the complex $M_{\sigma}=K[\sigma \cup \{a_1,\dots,a_{d}\}]$.  Notice that $a_1$ is the only blue vertex of $M_{\sigma}$, so $M_{\sigma}$ is a cone with vertex $a_1$ and is therefore contractible.

    The map $g$ is already defined on $\partial{\sigma}$.  Moreover, every proper face $\tau \subset \sigma$ is either fixed by $g$ (if it misses a red vertex of $\sigma$), or is it mapped to $M_\tau \subset M_{\sigma}$.  We can cone this boundary map to $a_1$ to extend $g$ over $\sigma$.
\end{proof}

We are now ready to prove \cref{thm:main}.

\begin{proof}[Proof of \cref{thm:main}]
    Let $K, K'$, $L$, and $a_1,\dots,a_{d+1}$ be as in \cref{lem:fixit}.  Consider a map $g:L \to K$ from \cref{lem:fixit}.  Starting with our continuous map $f: K \to \rr^d$, we can construct $h = f \circ g: L \to \rr^d$.  We can apply \cref{lem:d-skeleton} to $h$.
    
    In other words, we can find $x_1,\dots,x_p \in L$ points from pairwise disjoint faces $\sigma_1,\dots,\sigma_p$ of $L$ such that $h(x_1) = \dots = h(x_p)$.  If $\sigma_i$ contains at most one red vertex, it is a face of $K$, $x_i \in K$, and $h(x_i) = f(x_i)$.  Therefore, if none of the face $\sigma_1,\dots,\sigma_p$ have two red vertices, we are done.  
    
    Otherwise, there is at most one face $\sigma_i$ that contains two red vertices, as it must have $a_{d+1}$.  In this case $h(\sigma_i) = f(g(\sigma_i))$, and $g(\sigma_i) \subseteq K[\sigma_i\cup\{a_1,\dots,a_d\}]$.  Let $\tau_i$ be a face of $K[\sigma_i\cup\{a_1,\dots,a_d\}]$ that contains $g(x_i)$.  The disjointness condition implies that $\tau_i$ is disjoint from any face $\sigma_j$ with $j \neq i$.  The points $x_j$ with $j \neq i$ and $g(x_i)$ satisfy the conclusion of the theorem.

    If the selected rainbow faces of $K$ containing $x_1,\dots,x_p$ do not use every color, they can be extended to satisfy that condition without losing disjointness.
\end{proof}

 \section{Counterexample to the colorful affine optimal Tverberg theorem for prime powers}\label{sec:counterexample}

With the current methods, any new pairs $(d,p)$ for which \cref{thm:BMZ-optimal} holds would imply \cref{conj:baranylarman} for $r=p$ and $r=p-1$. Due to the counterexamples for the topological Tverberg theorem, we know that $p$ must be a prime power, unless $d$ is sufficiently small.  However, even for $p$ a prime power the result may fail with affine maps.  We disprove the case $p=4, d\ge 3$.  The affine case for $p=4,d=2$ has been proved by Kliem \cite{kliem2021new}.

\subsection{Dimension $3$, $r=4$}

Below are the coordinates of $13$ points ($x_0$ through $x_{12}$) in $\rr^3$, presented as four triples and a singleton.  If we consider each triple as a set of a different color, no colorful Tverberg partition into four parts exists.  The example was found with the use of an LLM.

 \begin{table}[htbp]
\centering
\begin{tabular}{ccl}
\toprule
Index & Color & Coordinates    \\
\midrule
$0$ & $X_1$ & ($13$,  $60$,  $-64$)\\
$1$ & $X_1$ & ($355$,  $256$,  $429$)\\
$2$ & $X_1$ & ($-85$,  $-35$,  $-309$)\\
\addlinespace
$3$ & $X_2$ & ($17$,  $59$,  $-62$)\\
$4$ & $X_2$ & ($174$,  $-631$,  $165$)\\
$5$ & $X_2$ & ($18$,  $61$,  $-63$)\\
\addlinespace
$6$ & $X_3$ & ($12$,  $-114$,  $-79$)\\
$7$ & $X_3$ & ($95$,  $132$,  $14$)\\
$8$ & $X_3$ & ($-583$,  $0$,  $371$)\\
\addlinespace
$9$ & $X_4$ & ($14$,  $80$,  $-66$)\\
$10$ & $X_4$ & ($17$,  $60$,  $-61$)\\
$11$ & $X_4$ & ($25$,  $56$,  $-64$)\\
\addlinespace
$12$ & $X_5$ & ($-72$,  $17$,  $-212$)\\
\bottomrule
\end{tabular}
\caption{Coordinates of points for the affine counterexample for $r=4, d=3$}\label{tab:coordinates}
\end{table}

To verify that this is indeed a counterexample, as simple brute-force approach is sufficient.  A standard technique to determine if a partition of a set of points in $\rr^d$ is a Tverberg partition follows from Sarkaria's proof of Tverberg's theorem and its simplification by B\'ar\'any and Onn \cites{Sarkaria1992, Barany1996}.

Given a partition $B_1,\dots, B_r$ of points in $\rr^d$, to verify that it is a Tverberg partition, we can do the following.  First, let $v_1,\dots, v_r$ be the vertices of a non-degenerate simplex in $\rr^{r-1}$ whose interior contains the origin.  Then, for each $j=1,\dots,r$ and each point $b \in B_j$, take $(b,1)\otimes v_j \in \rr^{(d+1)(r-1)}$.  The resulting set in $\rr^{(d+1)(r-1)}$ contains the origin in its convex hull if and only if the original partition was a Tverberg partition.

So, for each rainbow partition of our set of $13$ points, we can use Sarkaria's technique to lift them to $\rr^{12}$.  This will result in $13$ points with a unique linear dependence (this is also confirmed by the program), and it suffices to check that the non-trivial linear dependence has both positive and negative coefficients to rule out that the convex hull of the set of points contains the origin.  This is done exhaustively by a Python program that it can be found in the repository below.  The Python program checks each of the $55{,}296=4 \cdot 24^3$ possible partitions.  We can fix the partition of one of the triples in advance.  Then, we want to assign labels in $\{1,2,3,4\}$ to each point so that no color repeats labels, we have four options for the singleton point and $24$ options for each of the remaining three triples.  Our Python program uses $v_1 = (1,0,0)$, $v_2=(0,1,0)$, $v_3=(0,0,1)$, and $v_4=(-1,-1,-1)$.

The following repository contains the Python program that verifies the example and a \texttt{README} file that explains it in greater detail.

\vspace{0.5cm}

\textbf{Repository: }

\url{https://github.com/psoberon-math/barany-larman-counterexample-verification}

\subsection{Dimensions greater than $3$}

We prove the following lemma, extending our example to higher dimesions.

\begin{lemma}
    Let $d \ge 3$ be an integer.  There exists a set of $3d+4$ points in $\rr^d$ colored with $d+1$ color classes with three element and a color class with a single element that has not colorful Tverberg partition into four part.
\end{lemma}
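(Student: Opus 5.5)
Induct on $d$, starting from the $d=3$ configuration of \cref{tab:coordinates}. Given a counterexample $Y\subset\rr^{d}$ with $d+1$ color classes of three points and one singleton, build one in $\rr^{d+1}$ by embedding $Y$ in the hyperplane $\rr^{d}\times\{0\}$ and adding a new color class of three points $z_1,z_2,z_3$. I would take $z_1=(0,\dots,0,1)$, $z_2=(0,\dots,0,-1)$ and $z_3=(0,\dots,0,t)$ with $t\neq 0$ (say $t=2$). Only the last coordinate matters for these points. The new configuration has $3(d+2)+1=3(d+1)+4$ points, which is the required count.

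Suppose the new set has a colorful Tverberg partition $C_1,\dots,C_4$ with a common point $w\in\bigcap_j\conv C_j$. The new color class has three points, so exactly one part, say $C_4$, misses it. The part $C_4$ lies in the hyperplane $\{x_{d+1}=0\}$, so the last coordinate of $w$ is $0$. Now consider each part $C_j$ with $j\le 3$. It contains exactly one point $z_{k}$ together with points of the hyperplane. Write $w=\lambda z_k+(1-\lambda)y_j$ with $y_j\in\conv(C_j\setminus\{z_k\})$. The last coordinate gives $\lambda\cdot(\pm1\text{ or }t)=0$, so $\lambda=0$ and hence $w\in\conv(C_j\setminus\{z_k\})$. (If $C_j\setminus\{z_k\}$ were empty we would have $w=z_k$, which is impossible since its last coordinate is nonzero.) Therefore the restricted parts $C_j\cap Y$ form a partition of $Y$ into four parts whose convex hulls share $w$. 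Each restricted part uses every old color at most once, so this is a colorful Tverberg partition of $Y$ in $\rr^d$, contradicting the induction hypothesis.

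The one point to check carefully is the definition of ``colorful partition'' used in the base case. In the example, each part takes one point from each triple, and the singleton joins one part. The restricted partition is exactly of this form, since each triple still gives exactly one point to each of the four parts. So the contradiction is legitimate, and I expect no real obstacle beyond this bookkeeping.

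In fact the argument does not need $z_1,z_2,z_3$ to sit on a line through the hyperplane. Any three points strictly off the hyperplane suffice: all three on the same side would work equally well, since a single nonzero last coordinate times $\lambda$ must vanish.
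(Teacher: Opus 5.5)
Your argument is correct and is essentially the paper's proof. You embed the lower-dimensional example in the hyperplane $x_{d+1}=0$, add a new triple off it, and use the part that misses the new color to force the common point into the hyperplane, which gives the new points coefficient $0$. The paper places all three new points at height $1$ and argues by positivity, while you allow arbitrary nonzero heights by using that each part contains exactly one new point; both work.

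There is one small slip in your bookkeeping paragraph. A triple cannot give one point to each of four parts: it meets exactly three of them. This does not affect the proof, because the condition you actually use (each part contains at most one point of each color) is the right one.
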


\begin{proof}
    We proceed by induction on $d$.  The case $d=3$ is the computer-assisted example above.  Now assume that the example in $\rr^d$ has been constructed.  
    
    Embed the $d$-dimensional example in $\rr^{d+1}$ as a set of points whose last coordinate is $0$.  Then, include a set of three points of a new color whose last coordinate is $1$ (call these points blue).

    If the new set has a colorful Tverberg partition into four parts, at least one part has no blue points.  Therefore, the point $x$ of intersection of the convex hulls must have last coordinate equal to $0$.  Moreover, any convex combination of a part that gives $x$ must assign coefficient $0$ to the blue points, or the last coordinate of the resulting point would be positive.  In other words, if we remove the blue points we have a colorful Tverberg partition of the example in $\rr^d$ into four parts, which by induction does not exist.
\end{proof}

\subsection*{Disclosure of AI use}  An LLM was used to simplify the proof of the main result and to construct the counterexample.  The author has reviewed all arguments used, and has executed and reviewed the verification program.  The author assumes all responsibility for the results presented in this manuscript.
\bibliography{refref}

\end{document}